\newcommand{\arXiv}[1]{\href{http://arXiv.org/abs/#1}{\texttt{arXiv:#1}}\xspace}
\newcommand{\IR}{\mathbb{R}}
\newcommand{\xc}[1]{}
\newcommand{\IC}{\mathbb{C}}
\newcommand{\IZ}{\mathbb{Z}}
\newcommand{\IF}{\mathbb{F}}
\newcommand{\IA}{\mathbb{A}}
\newcommand{\QLo}{{\sf QL}}
\newcommand{\QL}{{\sf QL}}
\newcommand{\mc}[1]{\mathcal{#1}}
\newcommand{\mol}{\mc{MOL}}
\newcommand{\molf}{{\mc{MOL}_{fd}}}
\newcommand{\lat}{{\sf L}}
\newcommand{\calNP}{\textsf{NP}\xspace}
\newcommand{\calBP}{\mathcal{BP}\xspace}
\newcommand{\PSPACE}{\textsf{PSPACE}\xspace}
\newcommand{\calPSPACE}{\PSPACE}
\theoremstyle{plain}
\newtheorem{thm}{Theorem}
\newtheorem{pro}[thm]{Proposition}
\begin{document}

\title{Modular ortholattices and the ``Third  Life of Quantum Logic''}
\author{Christian Herrmann}
\address[Christian Herrmann]{Technische Universit\"{a}t Darmstadt FB4\\Schlo{\ss}gartenstr. 7\\64289 Darmstadt\\Germany}
\email{herrmann@mathematik.tu-darmstadt.de}

\keywords{Quantum logic; modular ortholattice; decision problems, complexity}

\begin{abstract}
In the editor's introduction to the special volume 
``The third life of quantum logic: Quantum logic
inspired by quantum computing'' of this journal, 
Dunn,  Moss, and  Wang
discussed  the r\^{o}le of modular ortholattices.
The present note is to provide some  background
for the results and problems mentioned there.
In particular, we
recall  the two ``limits'' of subspace ortholattices
of finite dimensional Hilbert spaces,
the latter being structures related directly to quantum
mechanics and quantum  computation.
These  two ``minimal'' continuous geometries have been
constructed  by von Neumann and shown  to be non-isomorphic.
\end{abstract}
\maketitle
\section{Introduction}
For a  short history of quantum logic we refer to
Dunn,  Moss, and  Wang \cite{dunn}.
Semantics in quantum logic 
is based on non-distributive structures such as the
 ortholattices $\lat(H)$ of subspaces of inner product spaces $H$,
the motivating case from quantum mechanics being Hilbert spaces.
The connectives ``and'', ``or'', and ``not'' 
are interpreted as intersection, closure of sum, and orthogonal complement.
In the seminal paper of Birkhoff and von Neumann \cite{BirkhoffVonNeumann},
the spaces $H$ are of finite dimension; for such, $\lat(H)$
satisfies Dedekind's   modular law,
while in infinite dimension only the orthomodular law holds. 
The further development
of quantum logic, that is its ``first two lives'',
focussed on infinite dimension and general orthomodular
structures. It was mainly von Neumann 
who investigated  modularity 
beyond finite dimension: In his
work on continuous geometries related to certain
rings of operators.

Interest in the modular case  was renewed
in
the ``third life'' of quantum logic, inspired by quantum computing:
In \cite{Hagge1}, Dunn, Hagge, Moss, and Wang 
discussed 
quantum logic tautologies (i.e. the
 equational theory) of $\lat(H)$ where 
$H$ is a finite dimensional Hilbert space.
In \cite{dunn}, this discussion was further elaborated and extended beyond
finite dimension.
The purpose of the present note is to 
provide  some background (and, maybe, easier access) to   results 
and problems in \cite{dunn} which concern modular ortholattices.  

In particular, we recall von Neumann's result \cite{neu2} 
that, for the hyperfinite type II$_1$ von Neumann algebra factor $\mc{R}$,
the ortholattice $\lat(\mc{R})$ of projections  
is not isomorphic to von Neumann's well known
example $CG(\IC)$ \cite{neu3} of a continuous geometry,
obtained as the metric completion of a discrete
construction.
This indicates that the approach to $\lat(\mc{R})$,
outlined in
 \cite[p.454]{dunn}, is somehow  problematic
and that exploring
the set $\QL(\lat(\mc{R}))$ of quantum logic tautologies 
 of $\lat(\mc{R})$
has to be done in the framework of  
von Neumann algebras. As shown by Luca Giudici (cf. \cite{hn})
such approach is indeed possible, proving
that $\QL(\lat(\mc{R}))$ is the intersection
of the $\QL(\lat(H))$, $H$ ranging over all finite dimensional Hilbert
spaces. Thus,
in view of \cite{hard}, $\lat(\mc{R})$
and $CG(\IC)$ have the same quantum logic tautologies,
namely those shared by all $\lat(H)$, $H$ ranging over
finite dimensional Hilbert spaces.
Though, more investigations appear 
to be needed concerning the
r\^{o}le of these
two ``limits'' of
finite  dimensionals 
in 
 the logic approach to quantum mechanics and
quantum computation. 

For each $H$, with $3\leq \dim H <\omega$,
as well as for $\lat(\mc{R})$ the set of tautologies is
decidable \cite{hn,hard}; in all cases,
 the complexity of the complementary problem  
is complete for the same class in the Bum-Shub-Smale
model of non-deterministic real computation
\cite{arxiv}.
 Satisfiability is  complete for  this class,
given fixed $H$ \cite{arxiv}, undecidable for the class
of all  $\lat(H)$ as well as for $\lat(\mc{R})$ and
$CG(\IC)$ \cite{sat}.

As observed in \cite{dunn},
current interest focusses on the categorical
approach to quantum computing as in  
the special volume of this journal. Though, the questions 
considered in \cite{dunn}
make sense also for the additive category
of finite dimensional Hilbert spaces enriched with
adjunction. Transfer of results may rely
on the correspondence of $\lat(H)$ respectively $\lat(H^3)$ and
the $*$-regular ring of endomorphisms of $H$.

\section{Geometric background}
A  \emph{modular ortholattice}, shortly MOL,
is a modular lattice $L$ with bounds $0,1$ and an orthocomplementation
$x \mapsto x'$ (cf.
Section 1.2 of \cite{dunn}). $\mol$ denotes the class of all MOLs.
 $L$ is of finite \emph{dimension}
or \emph{height} $d$ if some/any maximal chain in $L$ has $d+1$ elements,
we write $d=d(L)$ and denote by $\molf$
the class of all MOLs of finite dimension.
Also, for $L\in \mol$,
 $d(L)\geq d$ means that $L$ contains  $d+1$-element  chains.

Now, consider a MOL $L$ of $d(L)=d$.
 Up to isomorphism, $L$ is the subspace lattice $\lat(P)$ of a 
$d-1$-dimensional projective space $P$
 with an anisotropic polarity,
providing the involution on $\lat(P)$.
The lattice 
 $L$ is isomorphic to a direct product of  simple lattices
$\lat(P_i)$ where the $P_i$ are the irreducible components of $P$
(cf. \cite{birk}).
Here,  for  $q\not\leq p'$ one has
$p$  perspective to $q$ via $(p +q)p'$
(we write $x+y$ and $xy$ for joins and meets with the usual rules
for omitting brackets). 
Hence, the polarity on $P$ induces one on each $P_i$ and
 the direct product decomposition of  the lattice  $L$ 
into the $\lat(P_i)$ is also one of  ortholattices.
In particular, for $L\in \molf$ the following are
equivalent: $L$ is directly irreducible, $L$ is subdirectly irreducible,
$L$ is simple.

Again, consider $L \in\molf$.
If  $L$ is simple and  $d\geq 4$ then $L$ is Arguesian \cite{jon}
since the associated projective  space is desarguean. 
If $L$ is simple,  Arguesian, and  $d\geq 3$ then 
$L$ is isomorphic  to the  lattice $\lat(V)$ of linear subspaces
of a $d$-dimensional vector space $V$ over a  division ring $F$ with
involution, endowed with an anisotropic 
sesquilinear form which is hermitean 
w.r.t. this involution (and all the latter give
rise to a simple Arguesian MOL).
 This is in essence  Birkhoff and von Neumann
\cite{BirkhoffVonNeumann};
$F$, $V$, and the form  are determined by $L$  up to ``isomorphism''
cf. \cite[Section 14]{Faure}.
$F$ may be quite far away from the complex number field
(cf. \cite[p.449]{dunn}); e.g. $F$ may be the field  extension of a finite field
by 
$d$ algebraically independent elements,  the involution being identity.

.

\section{Universal algebraic background}
Any interval $[b,c]$ of an MOL $L$ is
also an MOL with the induced orthocomplement
$x \mapsto x'c+b$
and isomorphic to the section $[0,a]$ where $a=b'c$.
Any section $[0,a]$ is
 a homomorphic image
of a sub-ortholattice of $L$, namely
of $[0,a] \cup [a',1]$.
Any homomorphic image $L'$  of 
 $L\in \molf$ is isomorphic to the  section $[0,a]$ 
 of $L$, $a$ the smallest preimage of the top element
of $L'$. For an MOL, any congruence relation of the lattice
reduct is also one of the ortholattice $L$.

Within $\mol$, any finite conjunction of identities
is equivalent to a \emph{tautology}, that is an
identity of the form $t=1$. 
The   equational theory of a class $\mathcal{C} \subseteq \mol$
is also addressed as the \emph{Quantum Logic}
$\QLo(\mathcal{C})$ of this class.
The \emph{variety} $\mc{V}(\mc{C})$
generated by $\mc{C}$
is obtained as the homomorphic images of sub-ortholattices
of direct products of members of $\mc{C}$ and 
is the model class of $\QLo(\mathcal{C})$.
By J\'{o}nsson's Lemma,
any subdirectly irreducible member of 
$\mc{V}(\mc{C})$ is a homomorphic image
of a sub-ortholattice of an ultraproduct
of members of $\mc{C}$.

In particular, if there is $d<\omega$ such that
$d(L) \leq d$ for each $L\in \mc{C}$ then
also $d(S) \leq d$ for each subdirectly irreducible 
 $S\in \mc{V}(\mc{C})$. 
For  $d(L)<\omega$ and subdirectly irreducible $S$
 it follows that
$S \in \mc{V}(L)$ if and only if $S$ 
embeds into an ultrapower of  section $[0,a]$ of $L$,
$d([0,a])=d(S)$. Further on, for $L_i\in\molf$, $\QLo(L_1)=\QLo(L_2)$
if and only if $L_1$ and $L_2$ have the same
universal theory, in particular $d(L_1)=d(L_2)$.

\section{Dimension axioms} Cf. \cite[p.450]{dunn}.
Axioms   granting $d(L) < d$ for lattices $L=\lat(V)$
can be obtained by excluding $d$-dimensional 
  ``coordinate systems''  
in intervals of $L$. Bergman and Huhn \cite{berg,huhn}
used $d$-\emph{diamonds},  that is $a_0, \ldots ,a_{d}$
any $d$ of which are independent in an interval
$[a_\bot,a_\top]$ and have join $a_\top$.
Within a modular lattice, if $a_i=a_j$ for some $i\neq j$ then
$a_\bot=a_\top$; that is, the $d$-diamond is \emph{trivial}. 
For a  $d-1$-dimensional projective space $P$,
non-trivial $d$-diamonds in $\lat(P)$
 are exactly the
systems of $d+1$ points any  $d$ of which are in general
 position; such exist if and only $P$ is irreducible.

There are terms $t^d_i(\bar z)$ in variables $\bar z=(z_0,\ldots z_d)$ 
such that, for any substitution  $\bar a$ in a modular
lattice,  the $t^t_i(\bar a)$  form an $d$-diamond 
and such that  $t^d_i(\bar a)=a_i$ if $\bar a$ is a $d$-diamond.
This means that the modular lattice (ortholattice) freely generated by a $d$-diamond,
considered as system of generators and relations,
is a projective modular (ortho-)lattice. The case $d=2$ can
be read off the diagram of the modular lattice with $3$ free
generators.

Slightly modifying the definition of  von Neumann,
a $d$-\emph{frame} is given by  elements
$a_1, \ldots ,a_{d}$, independent in an interval $[a_\bot,a_\top]$
such that $a_\top=\sum_i a_i$, 
and axes of perspectivity from $a_1$ to $a_j$, $j\neq 1$.
Such are systems of generators and relations
equivalent to $d$-diamonds within modular lattices.
Terms in  analogy to the above can be obtained,
easily, by recursion over $d$.

\begin{pro}
There is a sequence $\delta_d(\bar z)$ of
  $d+2$-variable lattice identities
such that $\delta_d(\bar z)$ is valid in the MOL 
 $L$ if and only if $L$ is a subdirect
product of MOLs $L_i$ with $d(L_i)\leq d$.
In particular, $L$ of $d(L)=d$
is simple if and only if $\delta_{d-1}$ is 
not valid in $L$.
\end{pro}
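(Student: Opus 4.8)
I would realize $\delta_d(\bar z)$, in the variables $\bar z=(z_0,\dots,z_{d+1})$, as the single $(d+2)$-variable lattice identity $t^{d+1}_0(\bar z)=t^{d+1}_1(\bar z)$, with $t^{d+1}_i$ the $(d+1)$-diamond terms recalled above. Since in a modular lattice equality of two entries of a $(d+1)$-diamond forces it to be trivial, this identity holds in an MOL $L$ exactly when no interval of $L$ carries a non-trivial $(d+1)$-diamond; a finite list of such identities would serve equally well and merges into one tautology, which is why the statement speaks of a sequence. The easy half is then that a subdirect product $L$ of MOLs $L_i$ with $d(L_i)\le d$ satisfies $\delta_d$: omitting one of the $d+2$ entries of a non-trivial $(d+1)$-diamond in $[a_\bot,a_\top]$ leaves $d+1$ independent elements with join $a_\top$, whose successive partial joins form a $(d+2)$-element chain inside $[a_\bot,a_\top]$, so such a diamond cannot occur in an MOL of height $\le d$; hence each $L_i$, then $\prod_iL_i$, then its sub-ortholattice $L$, satisfies the identity.

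For the converse I would pass to subdirectly irreducible quotients: by Birkhoff's subdirect representation theorem, any $L$ with $L\models\delta_d$ is a subdirect product of its subdirectly irreducible homomorphic images, each of which still satisfies $\delta_d$. So the whole statement reduces to showing that a \emph{subdirectly irreducible} MOL $S$ with $S\models\delta_d$ has $d(S)\le d$, equivalently that a subdirectly irreducible MOL of height $\ge d+1$ contains a non-trivial $(d+1)$-diamond. (One may also organise this around the remark recalled above that, by J\'{o}nsson's Lemma, all subdirectly irreducibles of the variety generated by the MOLs of height $\le d$ again have height $\le d$.)

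The finite-height part of that claim is classical: by the geometric background a subdirectly irreducible $S\in\molf$ is simple, so $S\cong\lat(P)$ for an irreducible projective space $P$ with anisotropic polarity, and if $d(S)\ge d+1$ then $\dim P\ge d$, so a $d$-dimensional subspace of $P$ carries $d+2$ points in general position (a simplex together with a unit point; for planes this is one of the projective-plane axioms), i.e.\ a non-trivial $(d+1)$-diamond. The hard part will be the infinite-height case, namely that a subdirectly irreducible MOL not of finite height contains non-trivial $n$-diamonds for every $n$. Here I would start from a chain of length about $2n$ in some interval $[a,b]$ of $S$, peel off complements repeatedly to extract many independent nonzero elements with join $b$, and then use the connectedness forced by subdirect irreducibility to upgrade these to $n$ pairwise perspective independent elements carrying coherent axes of perspectivity, i.e.\ an $n$-frame; since within modular lattices an $n$-frame and an $n$-diamond are interdefinable systems of generators and relations, a non-trivial $n$-diamond results. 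Making the chosen pieces genuinely perspective with coherent axes, in a lattice that need be neither complete nor atomic, is the real obstacle, and it is exactly where one leans on von Neumann's frame theory for continuous geometries \cite{neu3} or on the $n$-distributivity results of Huhn and Bergman \cite{huhn,berg}.

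Finally, for the ``in particular'' clause with $d(L)=d$: by the geometric background, $L$ being simple is equivalent to $L$ being directly irreducible, to $L\cong\lat(P)$ with $P$ irreducible of dimension $d-1$, to $P$ carrying $d+1$ points in general position, i.e.\ to $L$ possessing a non-trivial $d$-diamond, i.e.\ to the failure of $\delta_{d-1}$ in $L$. The only implication needing the main equivalence is that a non-trivial $d$-diamond in such an $L$ forces $L$ simple: the diamond occupies all of $[0,1]$, and since $L\cong\prod_i\lat(P_i)$ is a subdirect product of the $\lat(P_i)$ with $\sum_i d(\lat(P_i))=d$, the failure of $\delta_{d-1}$ forces some $d(\lat(P_i))\ge d$, whence every other factor is trivial and $L\cong\lat(P_i)$ is simple.
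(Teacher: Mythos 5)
Your choice of $\delta_d$ as $t^{d+1}_0(\bar z)=t^{d+1}_1(\bar z)$ is equivalent to the paper's $\prod_i t_i^{d+1}=\sum_i t_i^{d+1}$, and your easy direction (a non-trivial $(d+1)$-diamond yields a $(d+2)$-element chain, so it cannot live in height $\leq d$) is correct. The converse, however, is where your route diverges from the paper's and where it is incomplete. The paper does not argue factor-by-factor on subdirectly irreducible quotients; it invokes the Frink--J\'onsson embedding \cite{jon} of an arbitrary complemented modular lattice into a direct product of subspace lattices $\lat(P_i)$ of \emph{irreducible} projective spaces, notes that $\delta_d$ transfers to these factors, reads off $\dim P_i\leq d-1$ there (your general-position argument), and then concludes via J\'onsson's Lemma that all subdirect factors of $L$ have height $\leq d$. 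This handles finite and infinite height uniformly. You instead reduce to showing that every subdirectly irreducible MOL of height $\geq d+1$ contains a non-trivial $(d+1)$-diamond, settle the finite-height case correctly via the geometric background, and then for the infinite-height case offer only a sketch: extract independent elements from a long chain, use subdirect irreducibility to make them pairwise perspective with coherent axes, assemble a frame. You yourself flag that establishing the perspectivities is ``the real obstacle,'' and that is precisely the point: subdirect irreducibility of an infinite-height MOL gives you a monolith in the congruence lattice, not any direct handle on perspectivity of prescribed independent elements, and there is no dimension function or atomicity to lean on. That step is exactly the content that the Frink--J\'onsson embedding (equivalently, Huhn's analysis of $n$-distributivity \cite{huhn,berg}) supplies, so deferring it to ``von Neumann's frame theory'' leaves the central implication unproved rather than proved by different means.

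A secondary remark: your treatment of the ``in particular'' clause is fine and essentially the paper's (the paper leaves it implicit), but note that once the main equivalence is available you do not need the projective-space detour there at all --- a simple $L$ of height $d$ is subdirectly irreducible, hence cannot be a subdirect product of MOLs of height $\leq d-1$, and conversely a non-simple $L\in\molf$ decomposes directly into two proper factors each of height $\leq d-1$.
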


\begin{proof}
The identity  $\delta_d(\bar z)$ can be given in the form 
$\prod_i t_i^{d+1}=\sum_i t_i^{d+1}$.
Such, is valid in $L$ if $d(L)\leq d$, obviously.
On the other hand, assume $\delta_d$
valid in $L$. According to \cite{jon},
the lattice $L$
embeds into a direct product
of lattices $\lat(P_i)$, $P_i$ an irreducible projective space,
 in which $\delta_d$ is valid, too.  Thus,
the $P_i$ have dimension at most $d-1$ and  $L$ 
has all subdirect factors $L_j$ of $d(L_j)\leq d$.
\end{proof}
Particularly simple  
$\delta_d(\bar z)$ are the $d$-distributive laws of
\cite{berg,huhn}. From \cite{baer} it follows that finite 
MOLs are $2$-distributive.
Identities characterizing $d(L)$ for
$L=\lat(H)$, $H$ a finite dimensional
Hilbert space have been established by \cite{Hagge2,dunn},
for simple $L\in \molf$  by \cite{frey}.

\section{Equational theory} Cf. \cite[p.452-3]{dunn}.
For a $*$-subfield $\IF$ of $\IC$ (with conjugation)
consider  $\IF^d$ with the canonical scalar product.
We write $\QLo(\IF^d)=\QLo(\lat(\IF^d))$. 
Clearly, $\QLo(\IF_1^{d_1}) \subseteq \QLo(\IF_2^{d_2})$
if $\IF_2 \subseteq \IF_1$ and $d_2\leq d_1$;
and inequality holds if $d_2<d_1$.

Let $\IA$ denote the $*$-field of algebraic numbers
and recall that $\IA$ and $\IC$ are elementarily equivalent,
and that so are $\IA\cap \IR$ and $\IR$. Thus
$\QLo(\IF^d)=\QLo(\IC^d)$ for all $\IF\supseteq \IA$ and
 $\QLo(\IF^d)=\QLo(\IR^d)$ for all $\IF\subseteq \IR$ with $\IA\cap \IR
\subseteq \IF$. Also, observe that $\QLo(\IC^d) \subseteq
\QLo(\IR^{2d})$.

Let $\mc{N}=\mc{V}\{\lat(\IC^d)\mid d<\omega\}$,
that is $\QLo(\mc{N})= \bigcap_{d< \omega} \QLo(\IC^d)\}$.
\begin{thm}
Let $\IA\cap \IR \subseteq \IF \subseteq \IC\}$. The following hold.
\begin{enumerate}
\item $\QLo(\mc{N})=\QLo(\IF^d) \mid d<\omega)$.
\item $\QLo(\mc{N})=\QLo(L)$ where 
$L$ is  a direct limit of ortholattices
$\lat(\IF^d)$, $d \to \infty$. 
\item
$\QLo(\mc{N})=\QLo(CG(\IF))$,
where $CG(\IF)$ is
von Neumann's   example  \cite{neu3}
of a continuous geometry $CG(\IF)$
obtained
as the metric completion of an ortholattice as in 2. 
\item $\QLo(\mc{N})=\QLo(\lat(\mc{A}))$
where $\lat(\mc{A})$ is  
the projection ortholattice $\lat(A)$
 of some/any finite type  II$_1$ von Neumann algebra factor $\mc{A}$.
\item $\QLo(\mc{N})=\QLo(\mc{C})$,
 $\mc{C}$ the  class of  projection ortholattices
 of   finite Rickart $C^*$-algebras.
\end{enumerate}
\end{thm}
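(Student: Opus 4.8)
The plan is to establish the equalities in the order (1), (2), (3), then (4) and (5), reducing each to the ones before it.

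\emph{Ad (1).} I would first sandwich the candidate generators. From $\IA\cap\IR\subseteq\IF\subseteq\IC$ and the monotonicity recorded above, $\QLo(\IC^d)\subseteq\QLo(\IF^d)\subseteq\QLo((\IA\cap\IR)^d)=\QLo(\IR^d)$ for every $d$, the last equality by elementary equivalence of $\IA\cap\IR$ and $\IR$. Intersecting over $d$, it remains to see $\bigcap_d\QLo(\IR^d)=\bigcap_d\QLo(\IC^d)$. This follows from the two evident ortholattice embeddings $\lat(\IR^d)\hookrightarrow\lat(\IC^d)$, the complexification $U\mapsto U+iU$ (which preserves meets, joins and --- since the real scalar product complexifies to the hermitean one --- orthocomplements), and $\lat(\IC^d)\hookrightarrow\lat(\IR^{2d})$, forgetting the complex structure: together they give $\QLo(\IR^{2d})\subseteq\QLo(\IC^d)\subseteq\QLo(\IR^d)$ for all $d$, so the two intersections coincide by cofinality of $\{2d:d<\omega\}$.

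\emph{Ad (2).} Take $L=L_0:=\varinjlim_k\lat(\IF^{2^k})$ along the doubling maps $U\mapsto U\otimes_{\IF}\IF^2$. These are ortholattice embeddings --- the only nontrivial point being $(U\otimes\IF^2)^\perp=U^\perp\otimes\IF^2$ for the tensor of the two forms --- so each $\lat(\IF^{2^k})$ is a sub-ortholattice of $L_0$ and every finite tuple from $L_0$ already lies in one of them. Hence an MOL identity holds in $L_0$ iff it holds in all $\lat(\IF^{2^k})$, i.e.\ $\QLo(L_0)=\bigcap_k\QLo(\IF^{2^k})=\bigcap_d\QLo(\IF^d)=\QLo(\mc{N})$ by (1), using that $d\mapsto\QLo(\IF^d)$ is decreasing with $\{2^k\}$ cofinal.

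\emph{Ad (3).} Here $CG(\IF)$ is the completion of $(L_0,\delta)$, $\delta(a,b)=D(a\vee b)-D(a\wedge b)$ with $D$ the normalised dimension. By von Neumann \cite{neu3} the operations $\vee,\wedge$ are $1$-Lipschitz for $\delta$ and $x\mapsto x'$ is a $\delta$-isometry (since $D(x')=1-D(x)$), so all three extend continuously: $CG(\IF)\in\mol$, it contains $L_0$ as a $\delta$-dense sub-ortholattice, and every MOL term induces a continuous self-map of $CG(\IF)^n$. The equaliser of two terms is therefore $\delta$-closed, so if an identity holds on the dense set $L_0^n$ it holds on all of $CG(\IF)^n$; thus $\QLo(CG(\IF))=\QLo(L_0)=\QLo(\mc{N})$.

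\emph{Ad (4) and (5).} One inclusion is soft. A type II$_1$ factor $\mc{A}$ contains, for each $d$, a unital copy of $M_d(\IC)$ ($d$ orthogonal equivalent projections summing to $1$); a finite-dimensional subalgebra is a von Neumann subalgebra, and passing to a von Neumann subalgebra preserves meets, joins and $1$ of projections (e.g.\ $p\wedge q=\lim_n(pq)^n$ lies in the generated subalgebra), so $\lat(\IC^d)=\lat(M_d(\IC))\hookrightarrow\lat(\mc{A})$ and $\QLo(\lat(\mc{A}))\subseteq\bigcap_d\QLo(\IC^d)=\QLo(\mc{N})$; since $M_d(\IC)$ is itself a finite Rickart $C^*$-algebra, also $\lat(\IC^d)\in\mc{C}$ and hence $\QLo(\mc{C})\subseteq\QLo(\mc{N})$. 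The hard direction is $\QLo(\mc{N})\subseteq\QLo(\lat(\mc{A}))$, i.e.\ $\lat(\mc{A})\in\mc{V}(\mc{N})$. For $\mc{A}=\mc{R}$ hyperfinite this is Giudici's theorem (cf.\ \cite{hn,hard}): any finite subconfiguration of the continuous geometry $\lat(\mc{R})$ is $\delta$-approximable inside a finite-dimensional subalgebra, which exhibits $\lat(\mc{R})$ as a homomorphic image of a sub-ortholattice of an ultraproduct of the $\lat(\IC^d)$, so $\QLo(\lat(\mc{R}))=\bigcap_{\dim H<\omega}\QLo(\lat(H))=\QLo(\mc{N})$. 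For a general factor I would instead coordinatise the continuous geometry $\lat(\mc{A})$ by the $*$-regular ring of operators affiliated to $\mc{A}$ and exploit the trace/rank function to place $\lat(\mc{A})$ in $\mc{V}(\mc{N})$; this is the step I expect to be the main obstacle, and the place where the $C^*$-positivity of $\mc{A}$ --- not mere modularity --- is essential, for otherwise continuous geometries over, say, rational function fields of positive characteristic would intervene and $\QLo(\mc{N})$ would fail. Finally (5) reduces to (4): for a finite von Neumann algebra $A$ the type decomposition makes $\lat(A)$ a subdirect product of projection lattices of finite type I$_n$ factors ($=\lat(\IC^n)$) and type II$_1$ factors, whence $\lat(A)\in\mc{V}(\mc{N})$ by (4); and the passage from an arbitrary finite Rickart $C^*$-algebra to the von Neumann case --- placing $\lat(A)$ inside, or at least in $\mc{V}$ of, the projection ortholattice of a finite von Neumann algebra --- is, like the general-factor case of (4), where the structure theory does the real work.
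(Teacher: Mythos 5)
Your treatment of (1)--(3) is correct and is essentially the argument the paper compresses into a single line for each item: the monotonicity $\QLo(\IF_1^{d_1})\subseteq\QLo(\IF_2^{d_2})$ for $\IF_2\subseteq\IF_1$ and $d_2\leq d_1$, elementary equivalence of $\IA\cap\IR$ with $\IR$ (and of $\IA$ with $\IC$), the two embeddings $\lat(\IR^d)\hookrightarrow\lat(\IC^d)$ (complexification) and $\lat(\IC^d)\hookrightarrow\lat(\IR^{2d})$ (realification) --- the inclusion you derive from the latter, $\QLo(\IR^{2d})\subseteq\QLo(\IC^d)$, is the one actually needed --- the directed-union description of the limit in (2), and, for (3), the $\delta$-Lipschitz continuity of the operations, which makes term equalisers closed so that identities pass from the dense sub-ortholattice to the completion. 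That last argument is what the paper's phrase ``the metric completion of $L$ is in $\mc{V}(L)$'' encapsulates. The easy inclusions in (4) and (5), via unital copies of $M_d(\IC)$ inside a II$_1$ factor and the observation that $M_d(\IC)$ is itself a finite Rickart $C^*$-algebra, are likewise fine.

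The genuine gaps are the hard directions of (4) and (5), and you have correctly located but not closed them. Your $\delta$-approximation by finite-dimensional subalgebras establishes $\QLo(\mc{N})\subseteq\QLo(\lat(\mc{A}))$ only for the hyperfinite factor $\mc{R}$; the theorem asserts it for \emph{every} type II$_1$ factor, and factors lacking the relevant approximation property (free group factors, for instance) are not reachable this way, while ``coordinatise by the affiliated $*$-regular ring and exploit the trace'' names a strategy rather than giving an argument. For (5), the proposed reduction by type decomposition is unavailable: a finite Rickart $C^*$-algebra need not be a von Neumann algebra (nor even an $AW^*$-algebra), so there is no decomposition into type I$_n$ and type II$_1$ factors to invoke. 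The paper does not prove these steps either --- it cites Giudici's theorem as in \cite{hn} for (4) and \cite{hs} for (5) --- but it does record what the actual mechanism is: an orthogonality-preserving embedding of suitable countable sub-ortholattices into the lattice of all subspaces of an inner product space, obtained from the $*$-regular ring of affiliated operators in case (4) and from the GNS construction in case (5), not metric approximation or type decomposition. That representation step is the piece of machinery a self-contained proof must import.
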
 
(1) follows from the above remarks, (2) is obvious,
(3) follows from the fact that the metric completion of $L$
is in $\mc{V}(L)$;  (4)
 is due to Luca Giudici \cite{hn},  (5) 
to \cite{hs}. (4) and (5) 
rely on  an orthogonality preserving embedding  into the
lattice  of all  subspaces of some inner product space --
for certain  countable sub-ortholattices in (4), 
derived form the GNS-construction in (5).

 Recall that, in spite of some structural analogies
(cf. \cite{ara}),
 $CG(\IC)$
is not isomorphic to $\lat(\mc{R})$, $\mc{R}$ 
 the hyperfinite von Neumann algebra factor of type II$_1$,
 as shown by von Neumann \cite{neu2}, cf.  the preface to \cite{neu}.
Compare this with \cite[p.453]{dunn}. 
Both ortholattices are simple and not in $\molf$,
in particular not subdirect products of
  $\lat(\IC^d)$'s (cf. \cite[p.449]{dunn}). $CG(\IC)$
admits a representation in an inner product space
over some ultrapower of $\IC$ (\cite{hs2})
but it remains open whether there is a representation within some
Hilbert space (cf. \cite[p.453]{dunn}).

Also, it remains an open problem
whether $\lat(R) \in \mc{N}$ (or at least in $\mc{V}(\molf)$)
for any ortholattice $L(R)$ of projections of a $*$-regular
ring $R$, where $R$ is $\IC$-algebra and the action of $\IC$ 
 compatible with the involutions; this is open even for
the case that $\lat(R)$ is continuous.

\section{Test sets} Cf. \cite[p.451, 454]{dunn}.
A subset $S$ of an MOL $L$ is a \emph{test set} for $L$
if any identity is valid in $L$ provided  it is so if
the variables are 
assigned to elements of $S$, only. It was shown in \cite[p.451]{dunn}        that 
the  $\lat(\IF^d)$,  $2\leq d<\infty$,  do not admit
finite test sets. The following extends this  result
to infinite simple $L\in\molf$. 
 \begin{pro}
For each $d,m>1$  there is an ortholattice identity
$\sigma_{d,m}(\bar z,\bar x)$ in $d+1+m$ variables such that,
for any infinite simple MOL $L$ of $d(L)=d$,
$\sigma_{d,m}$ fails in $L$ but is
 is satisfied under any assignment
identifying at least $2$ of the variables $x_i$.
\end{pro}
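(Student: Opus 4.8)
The plan is to let $\sigma_{d,m}$ express, via von Neumann's coordinatisation, that the $m$ points obtained from $x_1,\dots,x_m$ on a fixed line of a $d$-frame are \emph{not} pairwise distinct: this becomes a valid ortholattice identity as soon as any two of the $x_i$ are identified, yet it must fail in any simple $L$ with $d(L)=d$ whose lines carry at least $m$ points, i.e.\ in any infinite such $L$. First I would normalise $\bar z$ using the $d$-diamond terms $t^d_i(\bar z)$ introduced above: put $a_i:=t^d_i(\bar z)$, $u:=\prod_i a_i$ and $v:=\sum_i a_i$, so that for every substitution the $a_i$ form a $d$-diamond of the interval $[u,v]$, with $a_i=z_i$ when $\bar z$ already was a $d$-diamond. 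If $L$ is simple with $d(L)=d$, then $L\cong\lat(P)$ for an irreducible $(d-1)$-dimensional projective space $P$, which admits non-trivial $d$-diamonds; choosing such a $\bar z$, the section $[u,v]$ is coordinatisable, and since $L$, hence $P$, is infinite, its lines contain at least $m$ points (for $d=2$, $P$ is a projective line with infinitely many points, an ``$M_\kappa$'').

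For $d\ge 3$ I would invoke the coordinatisation of the geometry $[u,v]$ relative to $\bar a$ (von Neumann's frame calculus for $d\ge 4$, von Staudt's constructions in the plane $[u,v]$ for $d=3$): there are lattice terms, polynomial in $\bar z$, defining on the line $\ell:=a_1+a_2$ the operations of the coordinate ring, with $a_1$ in the r\^{o}le of $0$; composing with a fixed term $\rho(x,\bar z)$ that projects an arbitrary $x$ to a point of $\ell$ (returning $a_1$ on degenerate inputs) one obtains from the $x_i$ points $\rho_i=\rho(x_i,\bar z)$ of $\ell$, and then a term $\pi(\bar z,\bar x)$ computing the point of $\ell$ whose coordinate is $V:=\prod_{i<j}(\rho_i\ominus\rho_j)$, where $\ominus$ is the term-definable subtraction on $\ell$. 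I take $\sigma_{d,m}$ to be the identity
$$\pi(\bar z,\bar x)\wedge a_1 \;=\; a_1 .$$
If an assignment identifies $x_k$ with $x_l$, then $\rho_k$ and $\rho_l$ are the same term in the remaining variables, so $\rho_k\ominus\rho_l=0$ and hence $V=0$ become provable identities, forcing $\pi(\bar z,\bar x)=a_1$ and reducing the equation to $a_1\wedge a_1=a_1$ (and if the diamond $t^d_i(\bar z)$ is trivial, $u=v$ and every term collapses to $u$, so the equation again holds trivially); thus $\sigma_{d,m}$ is satisfied under any assignment identifying two $x_i$. Conversely, in an infinite simple $L$ with $d(L)=d$ I pick $\bar z$ a non-trivial $d$-diamond and $x_1,\dots,x_m$ honest points of $\ell$ with $m$ pairwise distinct coordinates and $\rho_i=x_i$; then $V\ne 0$ (the coordinate ring has no zero divisors), so $\pi(\bar z,\bar x)$ is a point of $\ell$ other than $a_1$, whence $\pi(\bar z,\bar x)\wedge a_1=u<a_1$ and $\sigma_{d,m}$ fails.

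For $d=2$ there is no coordinate ring; here $L\cong M_\kappa$ with $\kappa$ infinite, and $M_\kappa$ is locally finite --- a sub-ortholattice generated by fewer than $m$ elements lies in some $M_N$, $N=N(m)$ --- while $M_{N+1}\notin\mc{V}(M_N)$ (by J\'{o}nsson's Lemma the subdirectly irreducibles of $\mc{V}(M_N)$ are among the $M_k$, $k\le N$). An identity witnessing this non-membership is refuted in $M_\kappa$ by $N+1$ ``generic'' atoms but holds in $M_N$; feeding in the $z$-triangle to pin down a non-trivial $2$-diamond and the gadget $(x_i\wedge x_j')\vee(x_i'\wedge x_j)$, which is a provable $0$ whenever $x_i=x_j$, one reshapes it into a $\sigma_{2,m}$ with the two required properties. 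With $\sigma_{d,m}$ in hand the conclusion is immediate: if $S\subseteq L$ has $|S|<m$, every assignment of $x_1,\dots,x_m$ into $S$ repeats a value, so $\sigma_{d,m}$ holds on $S$ although it fails in $L$; hence no such $S$ is a test set, and, $m$ being arbitrary, $L$ has no finite test set, extending the result for the $\lat(\IF^d)$.

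The main obstacle is to make the direction ``coincidence forces validity'' an honest \emph{identity} rather than merely a fact about points of $\ell$: one must route the $x_i$ through terms $\rho_i(x_i,\bar z)$ for which $x_k=x_l$ \emph{provably} forces $\rho_k\ominus\rho_l=0$ (hence $V=0$), and keep $\pi$ and $\ominus$ well-behaved on arbitrary, possibly ``non-point'' arguments of $L$, so that $\sigma_{d,m}$ is valid there too; this clamping is routine but fussy. Dually one must check that a \emph{single} assignment in $L$ can simultaneously avoid \emph{all} spurious degeneracies (non-trivial diamond, $\rho$ hitting genuine distinct points, nothing accidentally equal to a frame element), which is precisely where infinitude of $L$ enters. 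For $d=2$ the corresponding difficulty is to cast the ``$M_{N+1}\notin\mc{V}(M_N)$'' separating identity into the prescribed collapse-on-coincidence shape.
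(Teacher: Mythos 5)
Your overall strategy --- normalise $\bar z$ via the diamond terms $t^d_i(\bar z)$, push each $x_j$ through a term onto a point of a fixed line of the resulting configuration, and write down an identity that holds exactly when two of these points coincide --- is the same as the paper's. But the mechanism you choose for detecting coincidence, namely the coordinate ring and the product $V=\prod_{i<j}(\rho_i\ominus\rho_j)$ together with ``no zero divisors'', introduces gaps the paper's proof does not have. First, for $d=3$ a simple MOL of height $3$ is $\lat(P)$ for a projective \emph{plane} with an anisotropic polarity, and such a plane need not be Desarguesian (the paper itself treats ``Arguesian'' as an extra hypothesis at $d=3$); since the proposition is claimed for \emph{all} infinite simple $L$ of height $d$, you cannot invoke a coordinate division ring, and von Staudt only yields a planar ternary ring in general, so your $\ominus$ and the zero-divisor argument are unavailable. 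Second, for $d=2$ you abandon the construction entirely and only gesture at ``reshaping'' a J\'onsson-style identity separating $M_{N+1}$ from $\mc{V}(M_N)$ into the required collapse-on-coincidence form; that reshaping is exactly the content of the proposition in this case and is not carried out. Third, the ``clamping'' you defer as routine --- arranging that $x_k=x_l$ \emph{provably} forces $V=0$ and that $\pi$ and $\ominus$ behave on arbitrary degenerate inputs --- is the actual heart of the matter: von Neumann's ring operations satisfy their identities only on $L$-numbers of the frame, so without explicit normalising terms your $\sigma_{d,m}$ is not yet an identity with the stated properties.

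All of this machinery is avoidable, which is why I count these as genuine gaps rather than omitted routine detail. The paper detects coincidence purely lattice-theoretically: with $s^d_j(\bar z,x_j)=(t^d_0x_j)'(t^d_0+t^d_1)x_j+t^d_0t^d_1$, each value $\hat b_j$ is either $0$ or an atom of the line $[0,\hat a_0+\hat a_1]$ disjoint from $\hat a_0$ (unless everything collapses to a trivial diamond), and the identity $t_0t_1=t_0\prod_{j\neq k}(s_j+s_k)$ works because the join of two \emph{distinct} atoms of that line is the whole line, hence meets $t_0$ in $\hat a_0\neq 0$, whereas if $\hat b_j=\hat b_k$ the factor $s_j+s_k$ is a single atom (or $0$) disjoint from $\hat a_0$ and the right-hand side drops to $0=t_0t_1$. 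This is uniform in $d\ge 2$, needs neither Desargues nor coordinates, and uses infinitude of $L$ only to supply $m$ pairwise distinct atoms $\neq\hat a_0$ on a line. Replacing your coordinate-ring detector by such a geometric one would close the gaps at $d=2$, at $d=3$, and in the clamping step simultaneously.
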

\begin{proof}
Fix $d$ and recall the terms $t^d_i=t^d_i(\bar z)$.
 Define \[s^d_j=s^d_j(\bar z,x_j)=
(t^d_0x_j)'(t^d_0+t^d_1)x_j +t^d_0t^d_1.\]
Now, consider an assignment $\bar z \mapsto \bar a$, $\bar x \mapsto \bar b$
in an MOL of $d(L)=d$.  Let $\hat{a}_i$ and $\hat{b}_j$
denote the values of the terms $t^d_i$ and $s^d_j$.
Then either all $\hat{a}_i, \hat{b}_j$ are equal or the following holds: the
$\hat{a}_i$ form 
a nontrivial $d$-diamond and  are atoms of $L$,
$\hat{b}_j$ is $0$ or an atom in
the $2$-dimensional interval $[0, \hat{a}_0+\hat{a}_1]$, and
$\hat{a}_0\hat{b}_j=0$.
Moreover, if the $a_i,b_j$ satisfy the relations stated for
the $\hat{a}_i, \hat{b}_j$ then $\hat{a}_i=a_i$ and $\hat{b}_j=b_j$
for all $i,j$. Thus,  
with the identity  $\sigma_{d,m}(\bar z,\bar x)$ given as
\[t_0(\bar z)t_1(\bar z)=t_0(\bar z) \prod_{j\neq k}(s_j(\bar z,x_j)+s_k(\bar z,x_k))\]
one has 
$\sigma_{d,m}$ satisfied by $\bar a,\bar b$
if $b_j=b_k$ for some $j\neq k$,
falsified if $\bar a$ is a nontrivial $d$-diamond and
$a_0\neq b_j\neq b_k$ for all $j\neq k$.
\end{proof}
Concerning $CG(\IC)$ and $\lat(\mc{R})$, a
 test set $S$ is provided by the union of any system of sub-ortholattices
$\lat((\IA \cap \IR)^d)$, $d \to \infty$. Such $S$
are given by  construction in case of $CG(\IC)$,
by \cite[Theorem XIV]{ropIV} in case of $\lat(\mc{R})$; 
  the system can be chosen so that elements of $S$ 
have rational normalized dimension
with denominators being  powers of $2$.

\section{Axiomatization} Cf. \cite[p.452]{dunn}.
Based  on orthonormal frames and 
 von Neumann coordinatization \cite{neu},
 for fixed $d\geq 3$, 
one derives an 
 axiomatization of 
  the first order theory of $\lat(\IF^d)$
from one of $\IF$ \cite{hzau}.
The first order theory of $\lat(\IF^d)$ 
is finitely axiomatizable if and only so is that of $\IF$;
in particular, finite axiomatizability fails for
$\IA \cap \IR \subseteq \IF \subseteq \IC$-

\section{Decision  problems} Cf. \cite[p.452-3]{dunn}.
 Roddy \cite{Roddy89} has constructed a simple MOL $L_{Rod}$  of height
$14$ which interprets an unsolvable word problem for division rings
and used this to   show that there is a finite  ortholattice presentation
which has unsolvable word problem in any variety
of MOLs which contains $L_{Rod}$. 
 The uniform word problem is unsolvable for any variety
of MOLs containing as subreducts
the subspace lattices of $F^d$, $d < \omega$, for a fixed prime field $F$
\cite{hn}.
In particular, the decision problem for  quasi-identities 
is unsolvable for $\{\lat(\IC^d)\mid d< \omega\}$.

Decidability of $\QLo(\mol)$
and $\QLo(\molf)$
 are open problems; the constructions yielding
unsolvability for varieties of modular lattices 
have no counterparts in $\mol$.
Using Roddy's result,
the equational theory of the variety of $d$-distributive 
MOLs has been shown undecidable for any  fixed
$d \geq 14$  \cite{Micol}.
For $L\in \molf$, 
according to \cite{hard}, $\QL(L)$ is decidable if and only if
the theory of quasi-identities of $L$ is decidable.

For a class $\mc{C}$ of MOLs, the
\emph{refutation problem} is the complement
of the decision problem for $\QLo(\mc{C})$;
that is, to decide for any given identity
whether is fails in some member of $\mc{C}$.
The \emph{satisfiability problem} for $\mc{C}$
is to decide for any given   equation
(equivalently, any conjunction of equations) whether
there is $L \in \mc{C}$ with $0\neq 1$
and a satisfying assignment in $L$.

For nontrivial  $L \in \molf$, these problems are p-time
equivalent to each other and NP-hard  \cite{LiCS,arxiv}.
For  $d(L)\leq 2$, they are  NP-complete.
For fixed $3\leq d <\omega$ and $L=\lat(\IF^d)$,
where $\IF \subseteq \IC$,
both problems are p-time equivalent 
to the problem FEAS$_{\IZ,\IF\cap\IR}$:
To decide for a finite list of  multivariate polynomials
with integer coefficients whether there is a common
zero in $\IF\cap \IR$. 
For $\IF\supseteq \IA\cap \IR$,
 the latter is 
 complete for the complexity  class  $\calBP(\calNP_{\IR}^0)$
in  Blum-Shub-Smale non-deterministic of real computation;
in particular, the problem is in 
$\calPSPACE$.

$\QLo(\mc{N})$ is decidable \cite{hard,hn}.
This follows from
decidability of the first order theory of each $\lat(\IC^d)$
and  the fact that an identity $\varepsilon$
 falsified
in some $\lat(\IF^n)$ is falsified in 
$\lat(\IF^{d(\varepsilon)})$  with computable
function $d$.
The function  $d$ can be chosen bounded by the
length of the expression $\varepsilon$.
This gives a p-time reduction of the refutation problem to  
 FEAS$_{\IZ,\IR}$;  p-time equivalence can be shown, too.
\xc{is shown
  in \cite{hvar}.}
The satisfiability problem is unsolvable for
any $\mc{C} \subseteq \mol$ such that,
for some $\IF\subseteq \IC$, $\{\lat(\IF^d)\mid d<\omega\}$
is contained  in the quasi-variety generated by $\mc{C}$; in  particular,
this applies to $CG(\IC)$ and $\lat(\mc{R})$.

For (additive) categories, enriched by adjunction,
of finite dimensional Hilbert spaces (cf. \cite{ab}),
the analogues of the mentioned hardness and undecidablity results
follow  interpretating. On the other hand,
given a bound on dimension, decidability and the complexity bound
follow by use of coordinates as in \cite{arxiv},
also  under further enrichment, e.g.  by tensor products.   If only 
adjunction  is added to the category
of all finite dimensional Hilbert spaces, 
decidability of equations 
can be obtained as follows. If an equation $\eta$ fails,
then it does so 
in a 
finite subcategory $\mc{C}$.
Form the orthogonal direct sum $H$ of objects in $\mc{C}$.
Then $\eta$ fails in ${\sf End}(H)$,
the endomorphism $*$-ring $R$  of $H$.
$R$ is isomorphic to 
 von Neumann coordinate $*$-ring of $\lat(H^3)$
and $\eta$ translates in an ortholattice  identity $\varepsilon$.
Use    $\frac{1}{3}d(\varepsilon)$ from above as a bound for $\dim H$
and the decision procedure for ${\sf End}(H)$.


\begin{thebibliography}{DHMW05}


\bibitem{ab}
Abramsky, S.,  Coecke, B. (2009).
Categorical quantum mechanics.
In K. Engesser et al. (Eds.), \emph{Handbook of quantum logic and quantum structures --quantum logic}
(pp. 261--323). Amsterdam: Elsevier/North-Holland. 

\bibitem{ara} Ara, P.,  Claramunt, J. (2018).
Uniqueness of the von Neumann continuous factor. 
\emph{Canad. J. Math., 70}(5), 961--982. 

\bibitem{baer}  Baer, R. (1946).
 Polarities in finite projective planes. 
\emph{Bull. Amer. Math. Soc., 52}, 77--93.



\bibitem{berg}  
Bergman, G. M. (1968)
\emph{Commuting elements in free algebras and related topics in ring theory.},  
Thesis (Ph.D.) Harvard University.


\bibitem{birk}
Birkhoff, G. (1967)
\emph{Lattice theory. 
Third edition. AMS Colloquium Publications} (Vol. 25). 
Providence, RI: American Mathematical Society.  





\bibitem{BirkhoffVonNeumann}
 Birkhoff, G., von Neumann, J. (1936). The logic of quantum mechanics.
 \emph{Annals of Mathematics, 37}(4), 823--843.


    \bibitem{Hagge1}
  Dunn, J.M., Hagge, T.J.,  Moss, L.S. Wang, Z. (2005).
  Quantum logic as motivated by quantum computing.
  \emph{Journal of Symbolic Logic, 70}(2), 353--369.



\bibitem{dunn}
Dunn, J.M., Moss, L.S.,  Wang, Z. (2013).
Editors' introduction: the third life of quantum logic: quantum logic inspired by quantum computing.
\emph{Journal of Philosphical Logic, 42}(3), 443--459. 




\bibitem{Faure}
  Faure, C.A., Fr\"{o}licher, A. (2000). \emph{Modern Projective geometry},
Amsterdam:  Kluwer.




\bibitem{frey} 
Giuntini, R.,  Freytes, H., Sergioli, G. (2016).
Quantum logic associated to finite dimensional intervals of modular ortholattices.  \emph{J. Symbolic  Logic,  81}(2), 629--640. 

\bibitem{Hagge2}
 Hagge, T.J. (2007). $\QLo(\IC^n)$ determines $n$.
  \emph{Journal  Symbolic Logic, 72}(4), 1194--1196.

\bibitem{hard}
Harding, J. (2013).
Decidability of the equational theory of the continuous geometry CG(F).
\emph{Journal of  Philosophical Logic, 42}(3), 461--465. 

\bibitem{hn}  
  Herrmann, C. (2010).
  On the equational theory of projection
  lattices of finite von-Neumann factors. 
  \emph{J. Symbolic Logic, 75}(2),  1102--1110.

\xc{
\bibitem{hvar} Herrmann, C. (2019).
On the complexity of equational decision problems for finite height (ortho)complemented
modular lattices. \arXiv{1811.07846}. 
}


\bibitem{Micol}
  Herrmann, C., Micol, F., Roddy, M.S. (2005).
  On $n$-distributive modular ortholattices.
  \emph{Algebra Universalis, 53},  143--147.





\bibitem{hs}
Herrmann, C., Semenova, M. (2014).
  Rings of quotients of finite $AW^*$-algebras: Representation and
  algebraic approximation. 
  \emph{Algebra and Logic, 54}(4), 298--322.
 
\bibitem{hs2}
Herrmann, C., Semenova, M. (2016).
Linear representations of regular rings and complemented modular lattices with involution. 
\emph{Acta Sci. Math., 82}(3--4), 395--442. 






\bibitem{sat} 
Herrmann, C., Tsukamoto, Y., Ziegler, M. (2016).
On the consistency problem for modular lattices and related structures.
\emph{Internat. J. Algebra Comput., 26}(8), 1573--1595. 



\bibitem{LiCS}
 Herrmann, C., Ziegler, M. (2011).
 Computational complexity of quantum satisfiability.
 \emph{Proc. 26th Annual IEEE Symposium on Logic in
    Computer Science}, 175--184.  


\bibitem{arxiv}
 Herrmann, C, Ziegler, M. (2016). 
Computational complexity of quantum satisfiability. 
\emph{J. ACM, 63}(2), Art. 19. 

\bibitem{hzau}
Herrmann, C., Ziegler, M. (2019).
Definable relations in finite-dimensional subspace lattices with involution.
\emph{Algebra Universalis, 79}(3), Art. 68. 






\bibitem{huhn}
Huhn, A. P. (1972).
Schwach distributive Verb\"ande I. 
\emph{Acta Sci. Math., 33}, 297--305. 




\bibitem{jon}
J\'{o}nsson, B. (1965).
Modular lattices and Desargues' theorem. 
\emph{Math. Scand., 2}, 295--314. 








\bibitem{ropIV}  
Murray, F. J., von Neumann, J. (1943).
On rings of operators IV. 
\emph{Ann. of Math., 44}, 716--808. 



\bibitem{neu3}
von Neumann, J. (1936).
Examples of continuous geometries. 
\emph{NAS Proc., 22}, 101--108.  



\bibitem{neu2}
von~Neumann, J. (1958).
The non-isomorphism of certain continuous rings.
\emph{Ann. of Math., 67}, 485--496. 


\bibitem{neu}
von~Neumann, J. (1960).
 \emph{Continuous geometry}.
Princeton:  Princeton Math. Series no.25.




\bibitem{Roddy89}
Roddy, M.S. (1989). On the word problem for orthocomplemented
  modular lattices.  \emph{Canad. J.  Math., 37}, 961--1004.


\end{thebibliography}
\end{document}